\numberwithin{equation}{section} 
\numberwithin{figure}{section}
\titleformat{\subsection}[runin] 
{\bfseries} {\thesubsection{.}}{0.15cm}{}[.] 
\titleformat{\subsubsection}[runin] 
{\em}{\thesubsubsection{.}}{0.15cm}{}[.] 
\newtheorem{theorem}{Theorem}[section]
\newtheorem{corollary}[theorem]{Corollary} 
\theoremstyle{definition}
\newcommand\Oscr{\mathscr{O}}
\newcommand\B{\mathbb{B}} 
\newcommand\C{\mathbb{C}} 
\newcommand\CP{\mathbb{CP}}
\newcommand\N{\mathbb{N}} 
\newcommand\R{\mathbb{R}}
\newcommand\igot{\mathfrak{i}}
\renewcommand\igot{\mathfrak{i}}
\renewcommand\imath{\igot}
\newcommand\hra{\hookrightarrow}
\def\Ell1{\mathrm{Ell_1}} 
\def\CEll1{\mathrm{CEll_1}}
\begin{document} 

\fancyhead[LO]{Proper holomorphic embeddings with small limit sets} 
\fancyhead[RE]{F.\ Forstneri\v c} 
\fancyhead[RO,LE]{\thepage} 

\thispagestyle{empty} 


\begin{center} 
{\bf \LARGE Proper holomorphic embeddings with small limit sets} 

\vspace*{0.5cm} 

{\large\bf Franc Forstneri\v c} 
\end{center} 

\vspace*{0.5cm} 

{\small 
\noindent {\bf Abstract} 
\ Let $X$ be a Stein manifold of dimension $n\ge 1$.
Given a continuous positive increasing function 
$h$ on $\R_+=[0,\infty)$ with $\lim_{t\to\infty} h(t)=\infty$,
we construct a proper holomorphic embedding
$f=(z,w):X\hra \C^{n+1}\times \C^n$ satisfying $|w(x)|<h(|z(x)|)$ 
for all $x\in X$. In particular, $f$ may be chosen such that its limit set at 
infinity is a linearly embedded copy of $\CP^n$ in $\CP^{2n}$.
\hspace*{0.1cm} 
}

\noindent{\bf Keywords:}\hspace*{0.1cm} 
Stein manifold, proper holomorphic embedding

\noindent{\bf MSC (2020):}\hspace*{0.1cm} 
Primary 32H02; Secondary 32E10, 32Q56

%
%
%

\noindent{\bf Date:}\hspace*{0.1cm} 
30 November 2023


%
%

\section{The main result}\label{sec:intro} 
A theorem of Remmert \cite{Remmert1956CR}, Narasimhan 
\cite{Narasimhan1960AJM}, and Bishop \cite{Bishop1961AJM} states 
that every Stein manifold $X$ of dimension $n\ge 1$ admits
a proper holomorphic map to $\C^{n+1}$, a proper holomorphic immersion
to $\C^{2n}$, and a proper holomorphic embedding in $\C^{2n+1}$.
(See also \cite[Chap.\ VII.C]{GunningRossi2009}.)  
We are interested in the question how much space proper holomorphic 
embeddings or immersions $X\to \C^N$ need, and how small can 
their limit sets at infinity be. 

By Remmert \cite{Remmert1956MA}, the image 
$A=f(X)\subset \C^N$ of a proper holomorphic map $f:X\to\C^N$ is a closed 
complex subvariety of pure dimension $n=\dim X$.
Such an $A$ is algebraic if and only if it is contained, 
after a unitary change of coordinates on $\C^N$, in a domain of the form 
\[
	D=\{(z,w)\in \C^n\times \C^p = \C^N: |w|< C(1+|z|)\}
\]
for some $C>0$ (see Chirka \cite[Theorem 2,\ p.\ 77]{Chirka1989}).
%
Equivalently, if $H=\CP^N\setminus \C^N\cong \CP^{N-1}$ denotes 
the hyperplane at infinity and $A_\infty=\overline A\, \cap H$,
where $\overline A$ is the topological closure of $A$ in $\CP^N$,
then $A$ is algebraic if and only if there is a linear subspace 
$L\cong \CP^{N-n-1}$ of $H\cong \CP^{N-1}$ such that $L\cap A_\infty =\varnothing$. 
If this holds then $\overline A$ and $A_\infty$ are algebraic subvarieties of pure 
dimension $n$ and $n-1$, respectively.
If $X$ is not algebraic then the image of any proper holomorphic 
immersion $f:X\to\C^N$ is not algebraic either, so its limit set 
$f(X)_\infty \subset \CP^{N-1}$ has a nonempty intersection with 
every linear subspace $\CP^{N-n-1}\cong L \subset \CP^{N-1}$.
 
We construct proper holomorphic embeddings with 
images in small Hartogs domains. 

%
%
\begin{theorem}\label{th:main1} 
Let $X$ be a Stein manifold of dimension $n\ge 1$. 
Given a continuous increasing function $h:[0,\infty)\to (0,\infty)$ 
with $\lim_{t\to\infty} h(t)=\infty$ there 
exist a proper holomorphic embedding $(z,w):X\hra \C^{n+1}\times \C^n$ and 
a proper holomorphic immersion $(z,w):X\to \C^{n+1}\times \C^{n-1}$ satisfying 
\begin{equation}\label{eq:estimate}
	\text{$|w(x)|<h(|z(x)|)$\ \ for all $x\in X$.}
\end{equation}
Furthermore, given a compact $\Oscr(X)$-convex set $K$ in $X$,
an open neighbourhood $U\subset X$ of $K$, and a 
holomorphic map $f_0=(z_0,w_0): U \to \C^{n+1}\times \C^p$ 
satisfying \eqref{eq:estimate} for all $x\in K$, we can approximate $f_0$ 
uniformly on $K$ by a proper holomorphic embedding 
$f=(z,w):X\to \C^{n+1}\times \C^p$ if $p\ge n$, resp.\ immersion if $p=n-1$, 
satisfying \eqref{eq:estimate}. 
\end{theorem}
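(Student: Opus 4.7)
The plan is a standard inductive exhaustion argument, tuned to preserve the growth bound. Exhaust $X$ by $\Oscr(X)$-convex compact sets $K\subset K_0\Subset K_1\Subset\cdots$ with $\bigcup_j K_j=X$, and fix a summable sequence $\epsilon_j>0$. I would construct holomorphic maps $f_j=(z_j,w_j)\colon U_j\to\C^{n+1}\times\C^p$ on open neighborhoods $U_j\supset K_j$ satisfying, inductively: $f_j$ is an embedding (resp.\ immersion) on $U_j$; $\|f_j-f_{j-1}\|_{C^1(K_{j-1})}<\epsilon_j$; $|z_j(x)|>j$ for $x\in X\setminus K_{j-1}$; and the strict inequality $|w_j(x)|+2\epsilon_{j+1}<h(|z_j(x)|)$ holds on $K_j$. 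The limit $f=\lim_j f_j$ is then holomorphic, proper by the growth of the $z$-component, an embedding/immersion on each $K_{j-1}$ via the $C^1$-approximations, and satisfies \eqref{eq:estimate} by passing to the limit in the strict inequality with its safety margin.

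For the inductive step, I would first extend $f_{j-1}$ to a holomorphic map on a neighborhood of $K_j$ via Oka-Weil approximation, and then add a correction $(a_j,b_j)\in\Oscr(U_j)^{n+1}\times\Oscr(U_j)^p$. The component $a_j$ is chosen so that $|z_{j-1}+a_j|>j$ on $\partial K_j\cup (X\setminus K_{j-1})$ while $a_j$ remains small on $K_{j-1}$; $\Oscr(X)$-convexity of $K_{j-1}$ relative to $K_j$ together with Oka-Weil makes this routine, after prescribing values at a finite set separating the outer region from $K_{j-1}$. The component $b_j$ serves two purposes: it must be small enough that the growth bound still holds, and a further small generic correction $b'_j$ should make $(z_j,w_{j-1}+b_j+b'_j)$ an embedding on $K_j$ when $p\ge n$, or an immersion when $p=n-1$. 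Since the total target dimension $n+1+p$ equals at least $2n+1$, respectively $2n$, jet transversality applied on the compact $\Oscr(X)$-convex set $K_j$ (using finitely many holomorphic functions separating points and tangents) makes a generic such correction work.

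The principal obstacle is reconciling two opposing demands: properness forces $|z|$ to grow on successive shells, while the bound $|w|<h(|z|)$ must be preserved \emph{strictly} at each step with a safety margin for future perturbations. The key device is the order of operations: enlarge $|z_j|$ on $\partial K_j$ \emph{first}, so the unbounded monotone increase of $h$ pushes $h(|z_j|)$ up correspondingly, and only then insert the $w$-corrections needed for the embedding property, keeping these small on the region where $|z|$ is still moderate. Since in that region $w_{j-1}$ already satisfied $|w_{j-1}|+2\epsilon_j<h(|z_{j-1}|)$ by induction, small corrections preserve the strict inequality; outside that region the enlarged $h(|z_j|)$ provides the needed room. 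A summable choice of errors $\epsilon_j$ together with the margin $2\epsilon_{j+1}$ propagates the strict inequality into the uniform limit. A final subtle point is upgrading $C^0$-approximation to $C^1$-approximation on each $K_{j-1}$ to ensure the embedding/immersion property survives in the limit; this is handled by standard Cauchy estimates after slightly shrinking neighborhoods in the exhaustion.
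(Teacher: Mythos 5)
Your overall architecture (exhaust, enlarge $|z|$ before correcting $w$, carry a safety margin into the limit, upgrade $C^0$- to $C^1$-closeness by Cauchy estimates) matches the paper's. The genuine gap is the one step you declare routine: producing a correction $a_j$ with $|z_{j-1}+a_j|>j$ on the shell $K_j\setminus K_{j-1}$ while $a_j$ stays small on $K_{j-1}$. The $z$-component has only $n+1$ coordinates, so this is precisely the hard content of the Remmert--Narasimhan--Bishop circle of results, and it cannot be extracted from the Oka--Weil theorem by prescribing values at a finite set: interpolation at finitely many points gives no lower bound for $|z_{j-1}+a_j|$ on an entire shell, and a target datum that is ``huge on the shell, small on $K_{j-1}$'' is not a holomorphic (or even continuous) object on the connected compact $K_j$ that Oka--Weil could approximate. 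Moreover, your inductive conditions do not propagate: the hypothesis only gives $|z_{j-1}|>j-1$ on $\partial K_{j-1}$, so $\|z_j-z_{j-1}\|_{K_{j-1}}<\epsilon_j$ allows $|z_j|$ to be near $j-1$ at some boundary point, while continuity from the outside of $K_{j-1}$ would force $|z_j|\ge j$ there. The paper avoids this by a staircase of thresholds $t_i$ (the $i$-th shell is pushed past $t_i$, the previous shell only past $t_{i-1}$), not a single cutoff per step.

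The missing engine is Theorem \ref{th:proper} (Drinovec Drnov\v sek--Forstneri\v c, a Dor-type construction): for a strongly pseudoconvex $D\Subset X$ it lifts the boundary values of $z:\overline D\to\C^{n+1}$ past any level $t_1$ of $\sigma=|\cdot\,|$ while (a) approximating on an inner compact and, crucially, (b) never decreasing $|z|$ by more than $\epsilon$ anywhere on $\overline D$. Property (b) is what preserves both the largeness of $|z|$ already achieved on earlier shells (hence properness) and the inequality $|w|<h(|z|)$ on the intermediate region, where your proposal offers no control: a correction $a_j$ that is merely small on $K_{j-1}$ and large near $\partial K_j$ could make $|z_j|$, and hence $h(|z_j|)$, small somewhere in between, destroying \eqref{eq:estimate}. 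The subsequent extension from $\overline{D_i}$ to $\overline{D_{i+1}}$ with the new shell mapped outside $t_{i+1}\overline\B$ uses that $\C^{n+1}\setminus t\overline\B$ is an Oka domain (Kusakabe), another nonelementary input. Your transversality step for the embedding/immersion and the limit argument are fine, but without these two tools the induction does not close.
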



The function $h$ in Theorem \ref{th:main1} can be chosen to grow arbitrarily slowly, 
and hence the image $f(X)$ may be arbitrarily close to the subspace $\C^{n+1}\times \{0\}^n$ 
in the Fubini--Study metric on $\CP^{2n+1}$. 
Choosing $h$ such that $\lim_{t\to\infty} h(t)/t=0$ gives the following corollary. 

\begin{corollary}\label{cor:main1}
Every Stein manifold $X$ of dimension $n\ge 1$ admits a proper
holomorphic embedding $f:X\hra \C^{2n+1}$ whose limit set
$f(X)_\infty = \overline{f(X)} \cap H$ is a linearly embedded copy of 
$\CP^n$ in $H=\CP^{2n+1}\setminus \C^{2n+1} \cong \CP^{2n}$. 
In particular, every open Riemann surface
$X$ admits a proper holomorphic embedding in $\C^3$
whose limit set is a projective line $\CP^1\subset \CP^2$.
The analogous result holds for proper holomorphic immersions
$X\to \C^{2n}$. 
\end{corollary}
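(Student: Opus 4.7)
The plan is to deduce the bare existence claim from the approximation statement by taking $K=\varnothing$, so the focus is on the approximation version. The construction is inductive on a suitable exhaustion of $X$, and is designed to juggle three competing demands at each step: properness of the composite map, the embedding (resp.\ immersion) property, and the pointwise Hartogs bound $|w|<h(|z|)$.

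Fix an exhaustion $K\subset K_1\subset K_2\subset\cdots$ of $X$ by compact $\Oscr(X)$-convex sets with $K_j\Subset\mathring{K}_{j+1}$ and $\bigcup_j K_j=X$, and use Oka--Weil to extend $f_0$ from $U$ to a holomorphic map on $X$ that still satisfies the bound on $K$. I will produce holomorphic maps $f_j=(z_j,w_j):X\to\C^{n+1}\times\C^p$ together with summable positive numbers $\epsilon_j$ and margins $\delta_j$ satisfying $\sum_{k\ge j}\epsilon_k\ll\delta_j$, such that $f_j$ is an embedding (resp.\ immersion) on $K_j$; $|w_j|+\delta_j<h(|z_j|)$ on $K_j$; $|z_j|>j$ on $K_j\setminus K_{j-1}$; and $\|f_j-f_{j-1}\|_{K_{j-1}}<\epsilon_j$.

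The inductive step from $f_j$ to $f_{j+1}$ splits into two parts. First, using $\Oscr(X)$-convexity of $K_j$ with Oka--Weil, choose $g\in\Oscr(X)$ smaller than $\epsilon_{j+1}/4$ on $K_j$ but so large on $K_{j+1}\setminus K_j$ that, after replacing one coordinate of $z_j$ by $z_j+g$, the resulting $\tilde z$ satisfies $|\tilde z|>j+1$ on $K_{j+1}\setminus K_j$. This is the classical Remmert--Narasimhan--Bishop pushing trick. Second, perturb $w_j$ by a small $\eta\in\Oscr(X,\C^p)$. By a standard general position argument for holomorphic maps into $\C^{n+1+p}$---valid because $n+1+p\ge 2n+1$ when $p\ge n$ (resp.\ $\ge 2n$ when $p=n-1$)---a generic small $\eta$ makes $(\tilde z,w_j+\eta)$ an embedding (resp.\ immersion) on $K_{j+1}$. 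Choosing $\|\eta\|_{K_j}<\min(\delta_j/4,\epsilon_{j+1}/4)$ preserves the Hartogs bound on $K_j$ with new margin $\delta_{j+1}:=\delta_j/2$; on the annulus $K_{j+1}\setminus K_j$ the bound is automatic once $\|\eta\|_{K_{j+1}\setminus K_j}<h(j+1)/2$, because $h(|\tilde z|)>h(j+1)$ there by monotonicity of $h$, and this is arranged by using Oka--Weil to control $\eta$ globally on $K_{j+1}$.

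Summability of $\epsilon_j$ yields a holomorphic limit $f=(z,w):X\to\C^{n+1}\times\C^p$ on all of $X$. Properness follows from the growth of $|z_j|$ on the annuli together with the smallness of subsequent corrections: $\{|z|\le R\}\subset K_j$ for large $j$, hence compact. The embedding (or immersion) condition survives on each $K_j$ because it is an open condition and subsequent perturbations sum to an arbitrarily small quantity. The margins $\delta_j$ absorb all later drifts and keep $|w|<h(|z|)$ in the limit. The main obstacle throughout is the interaction between the embedding correction of $w$ and the Hartogs bound; the decisive observation is that monotonicity of $h$ decouples the two on each new annulus, since pushing $|z|$ outward first automatically enlarges the permissible range of $|w|$. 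Without this decoupling the modifications would compete directly and the induction would be substantially more intricate.
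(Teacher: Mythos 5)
The paper obtains this corollary from Theorem \ref{th:main1} in a single step: choose $h$ with $\lim_{t\to\infty}h(t)/t=0$; then for any sequence $x_k\in X$ with $|f(x_k)|\to\infty$ one has $|z(x_k)|\to\infty$ and $|w(x_k)|/|z(x_k)|\le h(|z(x_k)|)/|z(x_k)|\to 0$, so every limit point of $f(X)$ at infinity lies in the linear $\CP^n$ that is the closure of $\C^{n+1}\times\{0\}^n$ in $H$. Your proposal never performs this deduction: you work with a fixed but arbitrary $h$, never impose sublinearity, and never discuss the limit set at all, so the actual conclusion of the corollary is not reached even if your construction were sound. What you have written is an attempted re-proof of Theorem \ref{th:main1} itself.

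That re-proof has a genuine gap at its crux, the properness of $z$. You propose to choose, by Oka--Weil, a single $g\in\Oscr(X)$ with $|g|<\epsilon_{j+1}/4$ on $K_j$ such that $|\tilde z|>j+1$ on $K_{j+1}\setminus K_j$ after adding $g$ to one component of $z_j$. This is impossible by continuity: points of $K_{j+1}\setminus K_j$ accumulate on $bK_j\subset K_j$, where $|g|<\epsilon_{j+1}/4$ and where your inductive hypothesis only gives $|z_j|>j$; hence $|\tilde z|$ can be barely larger than $j$ arbitrarily close to $bK_j$, not $>j+1$. (The classical Remmert--Narasimhan--Bishop argument does not do what you attribute to it: it makes $\max_l|f_l|$ over several functions large on compacta whose union with the previous compact is $\Oscr(X)$-convex, and even then producing a proper map into the low-dimensional target $\C^{n+1}$ is a delicate induction, not a one-function Oka--Weil step.) Switching to the staircase bookkeeping of the paper (conditions (ii) and (iii), where $z_i$ exceeds $t_i$ only on the outermost annulus and $t_{i-1}$ on the previous one) does not rescue your step: one must still move $z_j$ outward on the closed annulus $\overline{K_{j+1}\setminus K_j}$, including near $bK_j$ where it is pinned to its old, merely $t_j$-large boundary values. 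This is exactly what the paper's two nontrivial tools accomplish --- Theorem \ref{th:proper} (Drinovec Drnov\v sek--Forstneri\v c) to lift $z_j(bD_j)$ to the level $\{|z|>t_{j+1}\}$ without decreasing $|z_j|$ much anywhere, followed by Kusakabe's theorem that $\C^{n+1}\setminus t_{j+1}\overline\B$ is an Oka domain in order to extend across the next annulus while staying outside the ball --- and neither is replaceable by an Oka--Weil perturbation of a single coordinate. The portions of your argument concerning the Hartogs bound (using monotonicity of $h$ on the new annulus) and the general-position perturbation to an embedding are fine and parallel the paper.
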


By the preceding discussion, the limit set $f(X)_\infty$ intersects every projective
subspace $L\subset\CP^{N-1}$ of dimension $N-n-1$, unless $f(X)$
is algebraic. Therefore, the nonalgebraic embeddings given by 
Corollary \ref{cor:main1} have the smallest possible limit sets.

Given a nonalgebraic complex subvariety $X$ of $\C^N$, its closure
$\overline X\subset\CP^N$ and the limit set $X_\infty \subset\CP^{N-1}$ 
need not be analytic subvarieties, and for any pair of integers 
$1\le n<N$ there are $n$-dimensional closed complex 
submanifolds $X\subset\C^N$ with $X_\infty = \CP^{N-1}$. 
(This always holds if $N=n+1$ and $X$ is nonalgebraic.) 
Indeed, if $X$ is a closed complex subvariety of $\C^N$ $(N>1)$ 
then for any closed discrete set $B=\{b_j\}_{j\in\N} \subset\C^N$ 
there exist a domain $\Omega\subset \C^N$ containing $X$ and a 
biholomorphic map $\Phi:\Omega \to \C^N$ such that 
$B \subset \Phi(X)$ (see \cite[Theorem 6.1]{Forstneric1999JGEA}
or \cite[Theorem 4.17.1 (i)]{Forstneric2017E}). Note that $X'=\Phi(X)$ 
is a closed complex subvariety of $\C^N$. Choosing 
$B$ such that its closure in $\CP^N$ contains the hyperplane at infinity 
implies $X'_\infty =\CP^{N-1}$. A characterization 
of the closed subsets of $\CP^{N-1}$ which are limit sets of closed complex
subvarieties of $\C^N$ of a given dimension does not seem to be known.

The corollary is especially interesting in dimension $n=1$. A long standing open
question (the Forster conjecture \cite{Forster1970}, also called  
the Bell--Narasimhan conjecture \cite{BellNarasimhan1990,Belletal1998})
asks whether every open Riemann surface, $X$, admits a proper holomorphic 
embedding in $\C^2$. Recent surveys of this subject can be 
found in \cite[Secs.\ 9.10--9.11]{Forstneric2017E} and the 
preprint \cite{AlarconLopez2022} by Alarc\'on and L\'opez, where 
the authors constructed a proper harmonic embedding of any open
Riemann surface in $\C\times \R^2\cong \C^2$ with a holomorphic 
first coordinate function. Note that if $X\to \C^2$ is a proper holomorphic
map with nonalgebraic image then $f(X)_\infty=\CP^1$. 
(There are algebraic open Riemann surfaces which do not embed as 
smooth proper affine curves in $\C^2$.) 
Corollary \ref{cor:main1} gives proper holomorphic embeddings $f:X\hra\C^3$ 
whose images are arbitrarily close to the subspace $\C^2\times \{0\}$ 
in the Fubini--Study metric on $\CP^3$, and $f(X)_\infty = \CP^1$.

It was recently shown by Drinovec Drnov\v sek and Forstneri\v c 
\cite[Theorem 1.3]{DrinovecForstneric2023JGA} that, under a mild condition
on an unbounded closed convex set $E\subset \C^{N}$,
proper holomorphic embeddings $f:X\hra \C^{N}$ from 
any Stein manifold $X$ with $2\dim X<N$ 
such that $f(X)\subset \Omega=\C^{N}\setminus E$ are dense in the space
$\Oscr(X,\Omega)$ of all holomorphic maps $X\to\Omega$.
A similar result holds for immersions if $2\dim X\le N$.
Their proof relies on the fact, proved by Forstneri\v c  
and Wold \cite{ForstnericWold2023IMRN}, that such $\Omega$ is an 
Oka domain. (See \cite[Definition 5.4.1 and Theorem 5.4.4]{Forstneric2017E}
for the definition and the main result concerning Oka manifolds.) 
Note that the domains in Theorem \ref{th:main1} are much smaller than 
those in \cite[Theorem 1.3]{DrinovecForstneric2023JGA} when the codimension
is at least $2$. On the other hand, Theorem \ref{th:main1} does not 
pertain to proper maps in codimension 1 (the case $p=0$).
We do not know whether a Hartogs domain of the form 
\begin{equation}\label{eq:Omega}
	\Omega=\{(z,w)\in \C^{n+1}\times \C^p: |w| < h(|z|)\},\quad 
	n\ge 1,\ p\ge 1, 
\end{equation}
which appears in Theorem \ref{th:main1}, is an Oka domain, 
except if $p=1$, the function $h>0$ on $\R_+$ 
grows at least linearly at infinity, and $\log h(|z|)$ is plurisubharmonic on $\C^{n+1}$ 
(see Forstneri\v c and Kusakabe \cite[Proposition 3.1]{ForstnericKusakabe2023X}). 
Our proof does not require the Oka property of $\Omega$ in \eqref{eq:Omega}.

We mention that a Stein manifold of dimension $n\ge 2$ admits a 
proper holomorphic embedding $X\hra\C^{N}$ with $N=\left[\frac{3n}{2}\right]+1$
and a proper holomorphic immersion with $N=\left[\frac{3n+1}{2}\right]$; see 
Eliashberg and Gromov \cite{EliashbergGromov1992}, Sch\"urmann 
\cite{Schurmann1997}, and \cite[Theorem 9.3.1]{Forstneric2017E}. 
The proofs are very delicate and depend on Oka theory. 
We do not know whether one can expect a similar control of the range of the
embedding in these dimensions.

%
%
\section{Proof of Theorem \ref{th:main1}}\label{sec:proof}

Our proof of Theorem \ref{th:main1} relies on the following technical result,
which is a special case of \cite[Theorem 1.1]{DrinovecForstneric2010AJM}
by Drinovec Drnov\v sek and Forstneri\v c. 
(See also \cite[Theorem 6]{ForstnericRitter2014}, which is based
on the same result.) 
Similar results were obtained earlier by Dor \cite{Dor1993,Dor1995}.

%
%
\begin{theorem} \label{th:proper}
Assume that $X$ is a Stein manifold of dimension $n\ge 1$, 
$D$ is a relatively compact, smoothly bounded, strongly pseudoconvex domain 
in $X$, $K$ is a compact set contained in $D$, $t_0$ is a real number, 
$\sigma: \C^{n+1}\to\R$ is a strongly plurisubharmonic exhaustion 
function which has no critical points in the set $\{\sigma\ge t_0\}$, 
and $g_0 : \overline D\to \C^{n+1}$ is a continuous map 
that is holomorphic in $D$ and satisfies
$g_0\bigl(\overline {D\setminus K}\bigr) \subset \{\sigma>t_0\}$. 
Given numbers $t_1>t_0$ and $\epsilon>0$, there is a 
holomorphic map $g : \overline D\to \C^{n+1}$ satisfying 
the following conditions:
\begin{enumerate}[\rm (a)]
\item $g(bD)\subset \{\sigma>t_1\}$.
\item $\sigma(g(x))>\sigma(g_0(x)) -\epsilon$ for all $x\in \overline D$. 
\item $|g(x)-g_0(x)|<\epsilon$ for all $x\in K$.
\end{enumerate}
\end{theorem}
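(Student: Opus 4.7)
The plan is to produce $g$ as the endpoint of a finite induction $g_0 = g^{(0)}, g^{(1)}, \ldots, g^{(N)} = g$, where at each stage we push the boundary image only a tiny bit higher in $\sigma$ by a \emph{single convex bump}, in a way that costs very little in the sup norm on $\overline D$. Since $\sigma$ has no critical points in $\{\sigma \ge t_0\}$, the level sets $\{\sigma=c\}$ for $c\ge t_0$ are smooth, strongly pseudoconvex real hypersurfaces of $\C^{n+1}$, and locally near any point of $\{\sigma>t_0\}$ there is a holomorphic direction $v$ with $d\sigma(v)>0$. This is what makes the bumps possible.

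The quantitative setup is to fix $N$ large and a subdivision $t_0=s_0<s_1<\cdots<s_N=t_1$ with $s_i-s_{i-1}<\epsilon/(2N)$, and to build $g^{(i)}:\overline D\to\C^{n+1}$ holomorphic on $D$ and continuous on $\overline D$, satisfying $g^{(i)}(\overline{D\setminus K_i})\subset\{\sigma>s_i\}$ for a slightly shrunken neighbourhood $K\subset K_i\subset\subset D$ of $K$ (chosen to interpolate between $K$ and $bD$ via a strongly psh exhaustion $\tau$ of $X$ with $K\subset\{\tau<0\}$), and $\|g^{(i)}-g^{(i-1)}\|_{\overline D}<\eta_i$ with $\sum_i\eta_i<\epsilon$. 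Then (c) follows because the modifications away from $bD$ are tiny and concentrated near $bD$, (a) follows by taking $K_N$ very close to $bD$, and (b) follows from a telescoping argument using continuity of $\sigma$ and the bound on $\sum\eta_i$.

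The key step is the single-bump lemma: given $g'$ with $g'(\overline{D\setminus K'})\subset\{\sigma>s\}$, construct $g''$ close to $g'$ on $\overline D$ with $g''(\overline{D\setminus K''})\subset\{\sigma>s+\delta\}$ for $K'\subset K''\subset\subset D$. To do this, cover $\overline{D\setminus K'}$ by finitely many small Stein-compact Cartan pieces $\overline U_j$ chosen so that $g'(\overline U_j)$ lies in a ball $B_j\subset\C^{n+1}$ on which $\{\sigma>s\}$ is convex (possible by strong plurisubharmonicity). On each $B_j$ there is a holomorphic vector $v_j$ pointing into $\{\sigma>s+\delta\}$; we locally replace $g'$ by $g'+\chi_j\cdot v_j$ where $\chi_j$ is a smooth cutoff supported on $\overline U_j$, and then use the Oka--Weil theorem, iteratively applied on a chain of Cartan pairs, to approximate this merely smooth local modification by a global holomorphic map $g''$ on $\overline D$. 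Convexity of $\{\sigma>s\}\cap B_j$ on each bump domain is crucial so that the small perturbations do not drop $\sigma$-values below $s$.

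The main obstacle is the gluing step that converts the finitely many local bumps into a single global holomorphic map while simultaneously controlling the sup norm on $K$ and ensuring the image of the whole boundary, not just each bump, lies in $\{\sigma>s+\delta\}$. This is done via an inductive Cartan-pair splitting argument with shrinking errors, exactly as in \cite{DrinovecForstneric2010AJM}; earlier versions of essentially the same construction appear in \cite{Dor1993,Dor1995}. Condition (b) is the most delicate to maintain: each bump can in principle lower $\sigma(g(x))$ slightly on the overlap regions, but because each individual displacement is $\eta_i$-small and $\sigma$ is uniformly Lipschitz on the bounded relevant region of $\C^{n+1}$, the total drop is controlled by $C\sum\eta_i<\epsilon$, as required.
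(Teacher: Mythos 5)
You are trying to prove a statement that the paper itself does not prove: Theorem \ref{th:proper} is imported verbatim as a special case of \cite[Theorem 1.1]{DrinovecForstneric2010AJM}, with condition (b) referred to \cite[Lemma 5.3]{DrinovecForstneric2010AJM}. Your overall strategy --- lift the boundary across the critical-point-free shell $\{t_0\le\sigma\le t_1\}$ by a finite chain of small holomorphic bumps glued along Cartan pairs --- is indeed the strategy of the cited proof (and of Dor's earlier work). However, the sketch contains gaps that are not merely technical.

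The most serious one is that your bookkeeping is self-contradictory. You require $\|g^{(i)}-g^{(i-1)}\|_{\overline D}<\eta_i$ with $\sum_i\eta_i<\epsilon$, which forces $\|g-g_0\|_{\overline D}<\epsilon$; but for $x\in bD$ with $\sigma(g_0(x))$ only slightly above $t_0$, condition (a) forces $|g(x)-g_0(x)|\ge \dist\bigl(g_0(x),\{\sigma\ge t_1\}\bigr)$, a quantity bounded below independently of $\epsilon$ once $t_1>t_0$ is fixed. (Likewise, $s_i-s_{i-1}<\epsilon/(2N)$ summed over $i$ gives $t_1-t_0<\epsilon/2$, which contradicts $t_1$ being arbitrary.) This is precisely why the theorem demands uniform approximation only on $K$ and settles for the one-sided estimate (b) elsewhere: the map \emph{must} move a lot near $bD$. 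Your derivation of (b) via ``Lipschitz constant times total displacement $\sum\eta_i$'' collapses together with the global sup-norm bound it rests on; obtaining (b) without any sup-norm control on $\overline{D\setminus K}$ is exactly the delicate content of \cite[Lemma 5.3]{DrinovecForstneric2010AJM}.

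Two further points. First, the local geometry is inverted: for strongly plurisubharmonic $\sigma$, the locally convexifiable side near a point of $\{\sigma=s\}$ is the sublevel set $\{\sigma<s\}$; the superlevel set $\{\sigma>s\}$ is the pseudoconcave side and is not convex in any ball, so ``convexity of $\{\sigma>s\}\cap B_j$'' is false and cannot be what prevents $\sigma$ from dropping. The working tool is a holomorphic support function (Levi polynomial) for the strongly pseudoconvex hypersurface $\{\sigma=s\}$, which supplies a genuinely holomorphic peak-type perturbation pushing the image outward while controlling $\sigma$ from below. Second, ``replace $g'$ by $g'+\chi_j v_j$ with a smooth cutoff $\chi_j$ and then use the Oka--Weil theorem to approximate this merely smooth modification'' is not a legitimate step: Oka--Weil approximates maps that are already holomorphic near an $\Oscr(X)$-convex compact, and a non-holomorphic map admits no uniform holomorphic approximation. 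The correct gluing keeps every local modification holomorphic on its bump and combines it with the unmodified map via the Cartan splitting lemma on the overlap, with errors tracked there --- this is the analytic core of the cited proof and cannot be bypassed.
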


Note if $\epsilon>0$ is small enough then condition (b) implies
\[ 
	g\bigl(\overline {D\setminus K}\bigr) \subset \{\sigma>t_0\}.
\] 
The analogous result holds much more generally,
and we only stated the case that will be used here. For condition (b),
see \cite[Lemma 5.3]{DrinovecForstneric2010AJM}, which is the main 
inductive step in \cite[proof of Theorem 1.1]{DrinovecForstneric2010AJM}.
We remark that a map from a compact set in a complex manifold is 
said to be holomorphic if it is holomorphic in an open neighbourhood of
the said set.

\begin{proof}[Proof of Theorem \ref{th:main1}]
We shall construct proper holomorphic embeddings $X\hra\C^{N}$ 
with $N\ge 2n+1$ satisfying \eqref{eq:estimate}; the same arguments 
will yield immersions when $N= 2n$. 

Let $\Omega\subset \C^{N}$ be a domain
of the form \eqref{eq:Omega} with coordinates $(z,w)\in \C^{n+1}\times \C^p$ 
where $p\ge n$, $N=n+1+p$, and the function $h:[0,\infty)\to (0,\infty)$ 
is as in the theorem. We shall use Theorem \ref{th:proper} with the exhaustion function 
$\sigma(z)=|z|$ on $\C^{n+1}$; the nonsmooth point at the origin will not matter. 
We denote by $\B$ the open unit ball in $\C^{n+1}$.

Since the set $K\subset X$ is compact and $\Oscr(X)$-convex,
there exist a smooth strongly plurisubharmonic Morse 
exhaustion function $\rho:X\to\R_+$ and a sequence 
$0<c_0<c_1<\cdots$ with $\lim_{i\to\infty}c_i=+\infty$
such that every $c_i$ is a regular value of $\rho$ and, setting
\[
	D_i=\{x\in X: \rho(x)<c_i\} \quad \text{for $i=0,1,2,\ldots$}, 
\]
we have that $K\subset D_0\subset \overline{D_0} \subset U$, 
where $U\subset X$ is a neighbourhood of $K$ as in the theorem 
(see \cite[Theorem 5.1.6, p. 117]{Hormander1990}). 
Note that the set $\overline {D_i}$ is $\Oscr(X)$-convex for every $i=-1,0,1,\ldots$. 
We may assume that the given holomorphic map $f_0=(z_0,w_0):U\to\Omega$ 
satisfies condition \eqref{eq:estimate} for all $x\in \overline{D_0}$ 
and $z_0(x)\ne 0$ for $x\in bD_0$. 
(We shall use the subscript in $z_i$ and $w_i$ as an index in the induction 
process; a notation for the components of these maps will not be needed.) 
Pick a number $t_0\in\R$ with 
\[
	0<t_0 <\inf_{x\in bD_0}|z_0(x)|.
\]
Choose a sublevel set $D_{-1}=\{\rho<t_{-1}\}$ of $\rho$ such that 
$K\subset D_{-1}\subset \overline{D_{-1}} \subset D_0$ and
\[
	z_0(\overline{D_0\setminus D_{-1}}) \subset \C^{n+1}\setminus t_0\overline\B.
\]
By the Oka--Weil theorem, we may approximate the map 
$w_0:U\to\C^p$ uniformly on $\overline {D_0}$ by a holomorphic map 
$w_1:X\to\C^n$ such that $(z_0,w_1)(\overline{D_0})\subset \Omega$. 

We shall now construct a holomorphic map $z_1:\overline D_1\to\C^{n+1}$
such that the holomorphic map $f_1=(z_1,w_1):\overline D_1 \hra\Omega$ enjoys 
suitable properties to be explained in the sequel. This will be the first step of an 
induction procedure.

Pick a number $t_1\ge t_0+1$ so big that
\begin{equation}\label{eq:t1}
	h(t_1) > \sup \{|w_1(z)|: z\in \overline {D_1}\}.
\end{equation}
(Such a number exists since $\lim_{t\to\infty}h(t)=+\infty$.) 
Fix $\epsilon>0$ whose precise value will be determined later.
Let $\tilde z_0:\overline {D_0} \to\C^{n+1}$ be a holomorphic map
given by Theorem \ref{th:proper} (with $\tilde z_0=g$ in the notation 
of that theorem, applied to the map $g_0=z_0$,
the compact set $\overline{D_{-1}}\subset D_0$, and the 
numbers $\epsilon$ and $t_0<t_1$). Condition (b) in Theorem \ref{th:proper}
gives
\[
	|\tilde z_0(x)| > |z_0(x)|-\epsilon\quad \text{for all $x\in \overline {D_0}$}. 
\]
Since the function $h$ in \eqref{eq:Omega} is continuous, 
it follows that if $\epsilon>0$ is small enough
then the map $(\tilde z_0,w_1):\overline {D_0}\to \C^N$ has range in $\Omega$, 
and we have that 
\begin{equation}\label{eq:tildez0}
	\tilde z_0(bD_0)\subset \C^{n+1}\setminus t_1\overline \B,
	\quad 
	\tilde z_0(\overline {{D_0} \setminus D_{-1}}) 
	\subset \C^{n+1}\setminus t_0\overline \B,
	\quad 
	|\tilde z_0-z_0|<\epsilon\ \ \text{on} \ \overline {D_{-1}}.
\end{equation}

We now use the fact that $\C^{n+1}\setminus t_1\overline \B$ is an Oka domain
(see Kusakabe \cite[Corollary 1.3]{Kusakabe2020complements}).  
Hence, the main result of Oka theory gives 
a holomorphic map $z_1: \overline D_1\to\C^{n+1}$ satisfying
\begin{equation}\label{eq:z1}
	z_1(\overline {D_1 \setminus D_0})
	\subset \C^{n+1}\setminus t_1\overline \B
	\quad \text{and}\quad 
	|z_1-\tilde z_0|<\epsilon\ \ \text{on} \ \overline {D_{0}}.
\end{equation}
(See \cite[Theorem 1.3]{Forstneric2023Indag} for a precise statement of 
a more general result. In the special case at hand, this was proved 
by a more involved argument in the paper 
\cite{ForstnericRitter2014} by Forstneri\v c and Ritter, predating Kusakabe's work 
\cite{Kusakabe2020complements}.) If the number $\epsilon>0$ is chosen small 
enough, it follows from \eqref{eq:t1}--\eqref{eq:z1} and the definition 
of $\Omega$ \eqref{eq:Omega} that 
\begin{equation}\label{eq:f1}
	z_1(\overline {D_0\setminus D_{-1}}) \subset \C^{n+1}\setminus t_0\B 
	\quad\text{and}\quad
	(z_1,w_1)(\overline D_1)\subset \Omega.
\end{equation}
Since the dimension of the target space $\C^N$ is at least $2\dim X+1$,
we may assume after a small perturbation that 
the map $f_1=(z_1,w_1):\overline D_1\to\Omega$ is an embedding 
satisfying the above conditions (see \cite[Corollary 8.9.3]{Forstneric2017E}). 
Assuming as we may that all approximations are close enough,
we also have that $|f_1-f_0|<\epsilon_0$ on $\overline {D_{-1}}$ for a 
given $\epsilon_0>0$.

Continuing inductively, we obtain an increasing 
sequence $t_0 < t_1 <t_2<\cdots$ with $\lim_{i\to\infty} t_i=\infty$,
a decreasing sequence $\epsilon_0>\epsilon_1>\epsilon_2>\cdots>0$ 
with $\lim_{i\to\infty} \epsilon_i=0$, 
and a sequence of holomorphic embeddings 
$f_i=(z_i,w_i):\overline{D_i} \hra \C^{2n+1}$ 
satisfying the following conditions for $i=1,2,\ldots$.
\begin{enumerate}[\rm (i)]
\item $f_i(\overline D_i)\subset\Omega$.
\item $z_i(\overline{D_i\setminus D_{i-1}}) \subset 
\C^{n+1} \setminus t_{i}\overline\B$.
\item  $z_i(\overline{D_{i-1}\setminus D_{i-2}}) \subset 
\C^{n+1} \setminus t_{i-1}\overline\B$. 
\item $|f_i-f_{i-1}|<\epsilon_{i-1}$ on $\overline{D_{i-2}}$.
\item $t_i\ge t_{i-1}+1$.
\item $0<\epsilon_i<\epsilon_{i-1}/2$.
\item Every holomorphic map
$f:\overline{D_i}\to\C^N$ with $|f-f_i|<2\epsilon_i$ on $\overline D_{i-1}$ 
is an embedding on $\overline{D_{i-2}}$ and 
satisfies $f(\overline {D_{i-1}})\subset \Omega$.
\end{enumerate}
Note that the conditions (i) and (ii) also holds for $i=0$ by the assumptions
on $f_0$, and the conditions (i)--(v) hold for $i=1$ by the construction 
of the map $f_1$. 

The inductive step is similar to the one
from $i=0$ to $i=1$, which was explained above. 
Assume inductively that conditions (i)--(v) hold for some $i\in \{1,2,\ldots\}$.
Pick a number $\epsilon_i$ satisfying conditions (vi) and (vii).
Also, fix a number $\epsilon>0$ whose precise value will be determined
during this induction step. By the Oka--Weil theorem, there is a holomorphic map
$w_{i+1}:X\to\C^p$ with $|w_{i+1}-w_i|<\epsilon$ on $\overline{D_i}$.
Choose a number $t_{i+1}\ge t_i+1$ so big that
\begin{equation}\label{eq:tiplus1}
	h(t_{i+1}) > \sup \{|w_{i+1}(x)|: x \in \overline {D_{i+1}}\}.
\end{equation}
If $\epsilon>0$ is chosen small enough then Theorem \ref{th:proper},
applied to the map $g_0=z_i:\overline{D_i}\to\C^{n+1}$, the compact set
$\overline{D_{i-1}}\subset D_i$, and the numbers $t_i<t_{i+1}$ furnishes
a holomorphic map $\tilde z_i:\overline{D_{i}}\to\C^{n+1}$ 
such that the map $(\tilde z_i,w_{i+1}):\overline {D_0}\to \C^N$ 
has range in $\Omega$ and  
\[ 
	\tilde z_i(bD_i)\subset \C^{n+1}\setminus t_{i+1}\overline \B,
	\quad
	\tilde z_i(\overline {{D_i} \setminus D_{i-1}})
	\subset \C^{n+1}\setminus t_i\overline \B,
	\quad
	|\tilde z_i-z_i|<\epsilon\ \ \text{on} \ \overline {D_{i-1}}.	
\] 
(For $i=0$ these are conditions \eqref{eq:tildez0}.)
Since $\C^{n+1}\setminus t_{i+1}\overline \B$ is an Oka domain
(see \cite[Corollary 1.3]{Kusakabe2020complements}),
there is a holomorphic map $z_{i+1}: \overline{D_{i+1}} \to\C^{n+1}$ satisfying
\[ 
	z_{i+1}(\overline{{D_{i+1}}\setminus D_i})
	\subset \C^{n+1}\setminus t_{i+1}\overline {\B} 
	\quad\text{and}\quad
	|z_{i+1}-\tilde z_i|<\epsilon\ \ \text{on}\ \overline {D_i}.
\] 
(This is an analogue of the condition \eqref{eq:z1}.)
Finally, we perturb the holomorphic map 
\[
	f_{i+1}=(z_{i+1},w_{i+1}):\overline{D_{i+1}} \to\C^N 
\]
slightly to make it an embedding. If all approximations 
are close enough then $f_{i+1}$ satisfies conditions (i)--(iv), and (v) 
holds by the choice of $t_{i+1}$. This completes  the induction step.

Conditions (iv) and (vi) imply that the sequence $f_i$ converges to the limit map 
\[
	f=(z,w)=\lim_{i\to\infty} f_i:X\to \C^N
\]
satisfying $|f-f_i|<2\epsilon_i$ on $\overline {D_{i-1}}$ for every $i=0,1,\ldots$. 
(In particular, we have that $|f-f_0|<2\epsilon_0$ on $K$.) 
Conditions (i) and (vii) then imply that $f$ is a holomorphic
embedding with $f(X)\subset \Omega$. Finally, conditions (ii)--(iv) and (vi) 
imply that the map $z:X\to\C^{n+1}$ is proper, and hence $f$ is proper
as map to $\C^N$. 
\end{proof}

%
%
%
%
\medskip 
\noindent {\bf Acknowledgements.} 
Forstneri\v c is supported by the European Union (ERC Advanced grant HPDR, 101053085) and grants P1-0291, J1-3005, N1-0237 from ARIS, Republic of Slovenia.




\vspace*{5mm} 
\noindent Franc Forstneri\v c 

\noindent Faculty of Mathematics and Physics, University of Ljubljana, Jadranska 19, SI--1000 Ljubljana, Slovenia 

\noindent 
Institute of Mathematics, Physics and Mechanics, Jadranska 19, SI--1000 Ljubljana, Slovenia 

\noindent e-mail: {\tt franc.forstneric@fmf.uni-lj.si}

\end{document}